\documentclass[11pt]{amsart}
%\title{Fractal dimensions of spectral measures of rank one perturbations of a positive self-adjoint operator}
\usepackage{amsmath}
\usepackage{amssymb}
\usepackage{amsfonts}
\usepackage{amsthm}
\usepackage{mathabx}
\usepackage{mathrsfs}
%\usepackage[margin=1in]{geometry}

%\numberwithin{equation}{section}

%%%%%%%%%%%%%%%%%%%%%%%%%%%%%%%%%%%%%%%%%%%%%%%%%%%%%%%%%%%%%%%%%%%%%%%%%%%%%%%%%%%%%%%%%%%%%%%%%%%%%%%%%%%%%%%%%%%%%%%%%%%%%%%%%%%%%%%%%%%%%%
%%%%%%%%%%%%%%%%%%%%%%%%%%%%%%NEW COMMANDS%%%%%%%%%%%%%%%%%%%%%%%%%%%%%%%
%%%%%%%%%%%%%%%%%%%%%%%%%%%%%%%%%%%%%%%%%%%%%%%%%%%%%%%%%%%%%%%%%%%%%%%%%%%%%%%%%%%%%%%%%%%%%%%%%%%%%%%%%%%%%%%%%%%%%%%%%%%%%%%%%%%%%%%%%%%%%%

\newcommand*\norm[1]{ \left\| #1 \right\| }

\newcommand*\Z{ \mathbb{Z} }

\newcommand*\R{ \mathbb{R} }
\newcommand*\C{ \mathbb{C} }
\newcommand*\N{ \mathbb{N} }

\newcommand*\T{\mathbb T }
\newcommand\set[1]{\left\{ #1 \right\}}

\newcommand\INNERPROD[2]{\left\langle #1, #2 \right\rangle}

%%%%%%%%%%%%%%%%%%%%%%%%%%%%%%%%%%%%%%%%%%%%%%%%%%%%%%%%%%%%%%%%%%%%%%%%%%%%%%%%%%%%%%%%%%%%%%%%%%%%%%%%%%%%%%%%%%%%%%%%%%%%%%%%%%%%%%%%%%%%%%
%%%%%%%%%%%%%%%%%%%%%%%%%%%%%%NEW OPERATORS%%%%%%%%%%%%%%%%%%%%%%%%%%%%%%%
%%%%%%%%%%%%%%%%%%%%%%%%%%%%%%%%%%%%%%%%%%%%%%%%%%%%%%%%%%%%%%%%%%%%%%%%%%%%%%%%%%%%%%%%%%%%%%%%%%%%%%%%%%%%%%%%%%%%%%%%%%%%%%%%%%%%%%%%%%%%%%

%%%%%%%%%%%%%%%%%%%%%%%%%%%%%%%%%%%%%%%%%%%%%%%%%%%%%%%%%%%%%%%%%%%%%%%%%%%%%%%%%%%%%%%%%%%%%%%%%%%%%%%%%%%%%%%%%%%%%%%%%%%%%%%%%%%%%%%%%%%%
%%%%%%%%%%%%%%%%%%%%%%%%%%%%%My Theorems%%%%%%%%%%%%%%%%%%%%%%%%%%%%%%%%%%%
%%%%%%%%%%%%%%%%%%%%%%%%%%%%%%%%%%%%%%%%%%%%%%%%%%%%%%%%%%%%%%%%%%%%%%%%%%%%%%%%%%%%%%%%%%%%%%%%%%%%%%%%%%%%%%%%%%%%%%%%%%%%%%%%%%%%%%%%%%%%

\theoremstyle{theorem}

\newtheorem{Remark}{Remark}

\theoremstyle{theorem}

\newtheorem{mythm}{Theorem}[section]

\newtheorem{mylemma}{Lemma}[section]

%%%%%%%%%%%%%%%%%%%%%%%%%%%%%%%%%%%%%%%%%%%%%%%%%%%%%%%%%%%%%%%%%%%%%%%%%%%%%%%%%%%%%%%%%%%%%%%%%%%%%%%%%%%%%%%%%%%%%%%%%%%%%%%%%%%%%%%%%%%%%%
%%%%%%%%%%%%%%%%%%%%%%%%%%%%%%Start Document%%%%%%%%%%%%%%%%%%%%%%%%%%%%%%%
%%%%%%%%%%%%%%%%%%%%%%%%%%%%%%%%%%%%%%%%%%%%%%%%%%%%%%%%%%%%%%%%%%%%%%%%%%%%%%%%%%%%%%%%%%%%%%%%%%%%%%%%%%%%%%%%%%%%%%%%%%%%%%%%%%%%%%%%%%%%%%

\title[Positivity of the Lyapunov exponent]{Positivity of the Lyapunov exponent for analytic quasiperiodic operators with arbitrary finite valued background}

\date{\today}

\begin{document}

\author{Matthew Powell}
\address{Department of Mathematics, University of California, 
Irvine, CA 92697}
\email{mtpowell@uci.edu}

\maketitle

\begin{abstract}
We study lower bounds on the Lyapunov exponent associated with one-frequency quasiperiodic Schr\"odinger operators with an added finite valued background potential. We prove that, for sufficiently large coupling constant, the Lyapunov exponent is positive with a uniform (in energy and background) minoration.
\end{abstract}

\section{Introduction}
%In this note we study the behavior of the Lyapunov exponent of Schr\"odinger operators dynamically defined on product systems of the form $\T \times \Z,$ where the dynamics on $\T$ are given by a shift: $x \mapsto x + \omega,$ and the dynamics on $\Z$ are given by a left shift: $n \mapsto n + 1.$ 
We are interested in operators on $\ell^2(\Z)$ of the form $\Delta + V,$ where $\Delta$ is the discrete Laplacian, $V(n) = \lambda v(x + n\omega) + v_1(n),$ $x,\omega \in \T,$  $v$ is a function on $\T,$ and $v_1: \Z \to \R$ is a background sequence of real numbers. 

Recently, many authors have turned their attention to the properties of Schr\"odinger operators with mixed-type potentials. One of the commonly studied models is the mixed quasiperiodic-random potential (see \cite{CaiDuarteKlein1, CaiDuarteKlein2} and references therein for known results). Of particular relevance to this note is that Cai, Duarte, and Klein recently proved a criterion for positivity of the (maximal) Lyapunov exponent, denoted $L(E)$ (see \eqref{LYAPDEF} for the definition), for mixed multifrequency quasiperiodic-random potentials, where the quasiperiodic potential is continuous \cite{CaiDuarteKlein2}. 

It is also possible, however, to consider properties of operators with quasiperiodic plus a deterministic background, such as a periodic sequence. Recently, Damanik, Fillman, and Gohlke \cite{DamanikFillmanGohlke} studied, among other more general objects, such operators where the (one-frequency) quasiperiodic potential is a trigonometric polynomial and the deterministic background is $q$-periodic, and they showed that, for large coupling constant, $\lambda,$ on the quasiperiodic potential, the Lyapunov exponent is positive. In particular, they showed that the Lyapunov exponent has an energy-independent lower bound of $\frac12 \ln(\lambda).$ As a consequence, they established Anderson localization for this model in the regime of large coupling constant.

Liu has also considered models with low-complexity backgrounds and established large-deviation estimates and modulus of continuity for the integrated density of states associated with these models \cite{WencaiDisc}; see also \cite{BourgainKachkovskiy}, where the low-complexity background was first incorporated in a localization-type argument.

This note is partially motivated by these recent results. We focus on one-frequency quasiperiodic operators with analytic potentials, along with a deterministic background consisting of a finite-range sequence$-$that is, a sequence which takes only finitely many values$-$and we prove that the Lyapunov exponent has an energy-independent and background-independent uniform minoration when the coupling constant is sufficiently large. As we can see from the existing results on mixed-type potentials, such an energy-independent result is not unexpected. The three main accomplishments here are, first, the background-independent nature of our lower bound; second, that our result holds for potentials which are analytic and not just trigonometric polynomials; and third, our backgrounds may be finite-valued and need not be periodic.

For one-frequency quasiperiodic operators with no background, positivity results go back to Herman \cite{Herman} for trigonometric polynomial potentials, and to Sorets and Spencer \cite{SoretsSpencer} for analytic potentials; see also \cite{BourgainBook}. Such results originally took the form $L(E) \geq \frac 12 \ln(\lambda).$ In the case of one-frequency analytic quasiperiodic operators with no background, this was improved to $L(E) \geq \ln(\lambda) - O(1)$ by Duarte and Klein \cite{SKLEINPOS} using a convexity argument for means of subharmonic functions, which bypasses the harmonic measure argument present in \cite{BourgainBook}. This lower bound is sharp, in the sense that $L(E) = \ln(\lambda)$ for the almost Mathieu operator. 

In the present paper, we find that it is, in fact, possible to obtain analogous results as \cite{SKLEINPOS} by carefully modifying the harmonic measure argument of \cite{BourgainBook} without appealing to convexity. Moreover, our approach is robust enough to apply when a finite-valued background is present. 

Analogous $\frac 12 \ln(\lambda)$ results for multifrequency quasiperiodic operators without background have been established by Bourgain \cite{BourgainContinuity}; the methods in the current note are unable to address the positivity of the Lyapunov exponent of multifrequency quasiperiodic operators with a background potential, but we plan to address this case in a sequel. 

%In this note we observe that, essentially following the argument of \cite{BourgainBook} for positivity of Lyapunov exponents for analytic, and not just trigonometric polynomial, quasiperiodic Schr\"odinger cocycles at high coupling, one can obtain a similar result for any finite-valued background without any ergodicity assumptions. Moreover, by using more careful estimation as in \cite{SKLEINPOS}, we are able to establish a lower bound of $\ln(\lambda) - O(1)$ when a background is present. 

%By generalizing an argument of Bourgain, it is actually possible to obtain more, though. 

More precisely, we consider the quasiperiodic operator $(H_{\lambda,\omega,x}^vu)(n) = u(n - 1) + u(n + 1) + \lambda v(x + n \omega) u(n),$ where $\omega,x \in \T$ and $v: \T \to \R$ is an analytic function which is not identically zero. We are interested in the behavior of $H = H_{\lambda,\omega,x}^v + v_1,$ where $v_1$ is a real-valued sequence on $\Z$ which takes only finitely many values. We consider lower limits of
\begin{equation}
L_N(E) = \frac 1 N \int_\T \ln\norm{M_N(x,E,\omega)} dx
\end{equation}
where
\begin{equation}
M_N(x,E,\omega) = \prod_{k = N}^1 \begin{pmatrix} E - \lambda v(x + k\omega) - v_1(k) & -1 \\ 1 & 0 \end{pmatrix}.
\end{equation}
We set
\begin{equation}\label{LYAPDEF}
L(E) = \liminf_{N \to \infty} L_N(E).
\end{equation} 
We call $L(E)$ the (lower) Lyapunov exponent. It is important to note that the limit need not exist in general. However, if the background is periodic, then we can actually easily see that the limit exists. Moreover, if the background potential is described by some ergodic process, such as a sub-shift on a finite alphabet, then we may replace $\liminf$ with $\lim$ and this definition will agree with the usual notion of (maximal) Lyapunov exponent after integration by a suitable ergodic measure associated with the background.

The recent result by Damanik, Fillman, and Gohlke, $L(E) \geq \frac 12 \ln(\lambda),$ (c.f. Theorem 4.2.5 of \cite{DamanikFillmanGohlke}) was established for potentials given by trigonometric polynomials with periodic backgrounds by appealing to Avila's global theory. The method of \cite{DamanikFillmanGohlke} does require periodicity of the background and does not easily extend to general analytic potentials. In contrast, we utilize properties of subharmonic functions to prove the sharp result $L(E) \geq \ln(\lambda) - O(1),$ which works for all analytic potentials with arbitrary finite-valued backgrounds. 

%More precisely, we consider the operator $H^v_{\lambda,\omega,x} + v_1$ where $(H^v_{\lambda,\omega,x} u)(n) = u(n - 1) + u(n + 1) + \lambda v(x + n \omega) u(n),$ where $v$ is analytic and $v_1$ is a finite-range sequence. We prove the following.

\begin{mythm}\label{MainTheorem}
Suppose $H = H^v_{\lambda,\omega,x} + v_1,$ with $H^v_{\lambda,\omega,x}$ as above. Then for any $q \in \N,$ there exists $\lambda_0 = \lambda_0(v, q),$ independent of the background, such that for any $\lambda > \lambda_0(v, q),$ and any sequence of $q$ real numbers, $v_1,$ we have $L(E) > \ln(\lambda) - O(1).$ 
\end{mythm}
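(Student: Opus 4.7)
My plan is to establish the uniform lower bound $L_N(E) \geq \ln\lambda - O(1)$ for all sufficiently large $N$, then conclude by taking $\liminf$. The starting point is the standard inequality $\|M_N(x, E, \omega)\| \geq |p_N(x, E)|$, where $p_N(z, E)$ is the $(1,1)$-entry of $M_N$, equivalently the characteristic polynomial of the truncated operator on $\{1, \dots, N\}$ with potential $V(k, z) = \lambda v(z + k\omega) + v_1(k)$. This yields
\[L_N(E) \geq \tfrac{1}{N}\int_\T \ln|p_N(x, E)|\, dx,\]
and $z \mapsto \ln|p_N(z, E)|$ is subharmonic on the strip $\{|\IM z| < \eta_0\}$ on which $v$ is analytic. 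The three-term recursion $p_N = (E - V(N))p_{N-1} - p_{N-2}$ exhibits $p_N$ as a leading product $\prod_{k=1}^N(E - V(k, z))$ plus corrections with strictly fewer $\lambda$-factors, and we write $p_N = \tilde p_N \cdot \prod_k (E - V(k, z))$.

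The first ingredient is the following uniform estimate: for $v: \T \to \R$ analytic and not identically zero, and for any bounded $S \subset \C$, there exists $C = C(v, S)$ with
\[\int_\T \ln|\lambda v(x) - w|\, dx \geq \ln \lambda - C\]
for all $w \in S$ and $\lambda \geq 1$. This follows by writing the left side as $\ln\lambda + \int_\T \ln|v(x) - w/\lambda|\, dx$ and noting that $w' \mapsto \int_\T \ln|v(x) - w'|\, dx$ is a subharmonic function on $\C$, finite-valued since $v - w'$ has only isolated zeros of finite order, hence bounded below on bounded sets. Because $v_1$ takes at most $q$ real values, all shifted energies $(E - v_1(k))/\lambda$ lie in a uniformly bounded set once we restrict to $|E|$ at most a fixed multiple of $\lambda$; for larger $|E|$, diagonal dominance of $M_N$ already yields $L(E) > \ln\lambda$.

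The main step adapts the harmonic-measure technique of \cite{BourgainBook} to the background setting. Choose $\eta \in (0, \eta_0/2)$ small enough that $v$ has no zeros on $\IM z = \eta$, and set $\delta = \min_{|\IM z|=\eta}|v(z)| > 0$. For $\lambda$ sufficiently large (depending on $v$, $\eta$, $q$, and $\max_j|v_1(j)|$), one establishes the integrated lower bound
\[\phi(\eta) := \tfrac{1}{N} \int_\T \ln|p_N(x + i\eta, E)|\, dx \geq \ln\lambda - C\]
by combining the factor-by-factor application of the uniform estimate above with a control of $\int_\T \ln|\tilde p_N(x + i\eta)|\, dx$ via the recursion $\tilde p_N - \tilde p_{N-1} = -\tilde p_{N-2} / [(E - V(N-1))(E - V(N))]$ and subharmonic analysis on the strip, exploiting that the denominators exceed $\lambda\delta/2$ outside a small exceptional set on $\IM z = \eta$. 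On the other hand, the trivial bound $\|M_N(z)\| \leq \prod_k \|A_k(z)\|$ yields the uniform upper bound $\phi(y) \leq \ln\lambda + C'$ for all $|y| \leq 2\eta$. Since $\phi$ is convex in $y$ (the mean of a subharmonic function on horizontal lines), three-point convexity at $y = 0, \eta, 2\eta$ gives
\[\phi(0) \geq 2\phi(\eta) - \phi(2\eta) \geq 2(\ln\lambda - C) - (\ln\lambda + C') = \ln\lambda - (2C + C'),\]
whence $L_N(E) \geq \phi(0) \geq \ln\lambda - O(1)$.

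The main obstacle is proving $\phi(\eta) \geq \ln\lambda - C$ uniformly in $N$: the naive pointwise estimate $|\tilde p_N(x + i\eta)| \geq 1/2$ from the recursion only holds for $N \lesssim (\lambda\delta)^2$, so an integrated subharmonic argument is needed to handle the small exceptional set on $\IM z = \eta$ where the denominators nearly vanish, via Cartan-type bounds on level sets. The finite-valued assumption on $v_1$ enters crucially in two places: it makes $\|v_1\|_\infty$ finite, so that for $\lambda$ large the $\lambda v$ term dominates the background in the transfer matrix factors on $\IM z = \eta$; and it keeps the shifted energies $(E - v_1(k))/\lambda$ in a uniformly bounded set, so that the key integral estimate applies uniformly across $k$ and across all backgrounds consisting of at most $q$ values.
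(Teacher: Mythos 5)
Your proposal takes a genuinely different route from the paper, but it leaves the central step unproven, so as written it has a real gap.

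First, the structural difference. You propose the Duarte--Klein-style argument: prove $\phi(\eta) \geq \ln\lambda - C$, use the trivial bound $\phi(2\eta) \leq \ln\lambda + C'$, and then invoke convexity of $y \mapsto \phi(y)$ to interpolate down to $y = 0$. The paper explicitly does \emph{not} do this (it says so in the introduction). Instead it proceeds via the harmonic measure of the strip $\{0 \leq \IM z \leq \rho/2\}$: it derives a \emph{pointwise} lower bound on $u_N(a + iy_0)$ as a product over the transfer-matrix factors (via the $(1,1)$-entry and a simple induction, not a determinant recursion), and an upper bound on $u_N(x + i\rho/2)$ with the \emph{same} product structure. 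Because $v(x + i\rho/2) = v(x + iy_0) + \eta(x)$ with $|\eta| \leq C_v$, the large factors on the two lines are comparable term by term, and the two contributions cancel nearly exactly when weighted by the harmonic measure. This avoids ever having to bound a correction term like $\int \ln |\tilde p_N|$ from below.

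Second, the gap. Your argument's load-bearing step is the uniform-in-$N$ lower bound $\phi(\eta) \geq \ln\lambda - C$, and you yourself flag that the pointwise estimate for $\tilde p_N$ only survives for $N \lesssim (\lambda\delta)^2$, so a ``Cartan-type bound on level sets'' is needed. That is not a throwaway remark: it is the entire difficulty of the problem, and you have not carried it out. The quantity $\tilde p_N = p_N / \prod_k (E - V(k,z))$ is meromorphic, not analytic, so a Cartan estimate does not apply to it directly; controlling $\int_\T \ln|\tilde p_N(x + i\eta)|\, dx$ from below in the presence of small-denominator indices is a nontrivial lemma that needs to be stated and proved. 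Without it, the claim $\phi(\eta) \geq \ln\lambda - C$ is unsupported.

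Third, your choice of $\eta$ does not accomplish what you need. You pick $\eta$ so that $v$ has no zeros on $\IM z = \eta$ and set $\delta = \min_{|\IM z|=\eta}|v|$. But the quantities you must bound away from zero are $v(x + i\eta) - (E - v_1(k))/\lambda$, not $v(x+i\eta)$ itself, and the shifted levels $(E - v_1(k))/\lambda$ range over up to $q$ values in a bounded set. A line on which $v$ avoids $0$ need not avoid those shifted levels. This is exactly the content of the paper's Lemma \ref{LineBound}: by an accumulation-of-zeros argument, for \emph{any} $q$-tuple of real levels there is a single height $y_0 \in [\delta/2, \delta]$ on which $v(\cdot + iy_0)$ is $\epsilon$-separated from all $q$ of them simultaneously, with $\epsilon$ uniform over the tuple by compactness. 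That lemma is the key input that makes the finite-valued background tractable, and your proposal has no analogue of it. If you want to pursue the convexity route, you would still need such a lemma to choose the line $\IM z = \eta$; and you would additionally need the Cartan-style estimate to control the correction term, which the paper's pointwise/harmonic-measure cancellation renders unnecessary.
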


\begin{Remark}
The $O(1)$ term in Theorem \ref{MainTheorem} is independent of the background and may be written down explicitly in terms of $\lambda_0$ and properties of $v.$
\end{Remark}

The background potentials we consider include periodic sequences, Sturmian sub-shifts of finite type, and Bernoulli random backgrounds. %This, in particular, extends the conclusion of supercriticality in Theorem 1.2.5 of \cite{DamanikFillmanGohlke} in two ways; first, we are able to treat analytic potentials, rather than just trigonometric polynomials; and second, we are able to treat arbitrary finite-valued backgrounds, rather than just periodic backgrounds. 

%In the case of a $q$-periodic background, we can actually find $\lambda_0 = \lambda_0(v)$ to ensure positivity.

%\begin{mythm}\label{SecondTheorem}
%Suppose $H = H^v_{\lambda,\omega,x} + v_1,$ with $H^v_{\lambda,\omega,x}$ as above. Then for $\lambda > \lambda_0(v),$ and any $q$-periodic sequence, $v_1,$ we have $L(E) > \frac 1q \ln(\lambda).$
%\end{mythm}

%One of the most well-studied quasiperiodic operators is the almost Mathieu operator (AMO), which uses the potential $v(x) = 2 \cos(x).$ As a corollary of Theorem \ref{MainTheorem}, we immediately obtain the following.

%\begin{mycor}
%Consider the usual AMO with a Sturmian background: $$(Hu)(n) = u(n - 1) + u(n + 1) + 2\lambda \cos(x + n \omega)u(n) + \mu \chi_{[1 - \alpha, 1]}(x + n\alpha)u(n),$$ with $\alpha \in [0,1]$ and $\mu \in \R.$ Then for $\lambda > 1$ we have $L(E) > \frac 12 \ln(\lambda),$ independent of our choice of $\mu$ and $\alpha.$
%\end{mycor}

%As was pointed out in \cite{DamanikFillmanGohlke}, Bourgain-Goldstein \cite{BourgainGoldstein} contains an argument for proving Anderson localization in the regime of positive Lyapunov exponent that can be readily extended to the operators with mixed-type potentials we consider.

\section{Proof of positivity}

 %We will assume throughout that $\sup |v(x)| \geq 1.$ 

\begin{mylemma}\label{LineBound}
Suppose $v$ is a bounded 1-periodic non-constant analytic function on the complex strip $|\Im(z)| < \rho, \rho > 0$ 
%Suppose, moreover, that $v_1(k)$ is a sequence of $q$ real numbers. 
Then for any $0 < \delta < \rho$ there is $\epsilon > 0$ depending only on $\delta, k,$ and $v$ such that, for any $k$-tuple $(E_1,...,E_k) \in \R^k,$
$$\sup_{\frac\delta 2 \leq y \leq \delta} \min_{1 \leq j \leq k} \inf_{x \in [0,1]} \left|v(x + i y) - E_j\right| > \epsilon.$$
\end{mylemma}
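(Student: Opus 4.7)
The plan is to argue by compactness, using the observation that for each $E \in \R$, the analytic function $v-E$ is not identically zero on the strip (since $v$ is non-constant), so by isolatedness of zeros it has only finitely many zeros on the compact rectangle $R := [0,1] \times [\delta/2, \delta] \subset \{|\Im z| < \rho\}$.

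I will suppose the conclusion fails, so that for each $n \in \N$ one can find a $k$-tuple $(E_1^{(n)}, \ldots, E_k^{(n)}) \in \R^k$ with
$$\sup_{\delta/2 \le y \le \delta}\, \min_{1 \le j \le k}\, \inf_{x \in [0,1]} |v(x+iy) - E_j^{(n)}| \le \frac{1}{n},$$
so that for every $y \in [\delta/2, \delta]$ some index $j = j(y,n) \in \{1, \ldots, k\}$ satisfies $\inf_x |v(x+iy) - E_j^{(n)}| \le 1/n$. Pass to a subsequence along which each $E_j^{(n)}$ either converges to a finite limit $E_j^* \in \R$ or escapes to $\pm\infty$. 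For indices of the latter type, $\inf_x |v(x+iy) - E_j^{(n)}| \ge |E_j^{(n)}| - \norm{v}_\infty \to \infty$ uniformly in $y$, so such indices cannot serve as $j(y,n)$ for large $n$ and may be discarded; assume then that all $E_j^* \in \R$.

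Fix any $y \in [\delta/2, \delta]$. Pigeonholing on the finite set $\{1, \ldots, k\}$ produces some $j^* = j^*(y)$ and a further subsequence along which $\inf_x |v(x+iy) - E_{j^*}^{(n)}| \le 1/n$. Since $E \mapsto \inf_x |v(x+iy) - E|$ is $1$-Lipschitz in $E$, and since $v(\cdot + iy)$ is continuous on the compact set $[0,1]$ (so the inf is attained at some $x_y \in [0,1]$), passing to the limit gives $v(x_y + iy) = E_{j^*}^*$. Hence $[\delta/2, \delta]$ is contained in the union, over $j \in \{1, \ldots, k\}$, of the $y$-projections of the zero sets $\{z \in R : v(z) = E_j^*\}$. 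By the initial observation each such zero set is finite, so the union is finite, contradicting the uncountability of the interval.

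The argument reduces the lemma to the isolatedness of zeros of a non-constant analytic function, and the only mild subtlety is the potential escape of some $E_j^{(n)}$ to infinity, which is handled by the crude bound $|v(x+iy) - E| \ge |E| - \norm{v}_\infty$ that makes such indices irrelevant in the limit.
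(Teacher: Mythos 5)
Your proof is correct and takes essentially the same route as the paper: negate the statement, use compactness of the energy parameters, pigeonhole over the finitely many indices $j$, and invoke isolated zeros of a non-constant analytic function on a compact subset of the strip. The only cosmetic differences are that the paper restricts a priori to $(E_1,\dots,E_k) \in [-2C_v,2C_v]^k$ rather than handling escape to infinity along the sequence, and the paper phrases the final contradiction via an accumulation point of zeros forcing $v - E_j \equiv 0$, where you instead use that each zero set is finite on the compact rectangle while $[\delta/2,\delta]$ is uncountable; these are the same underlying fact stated two ways.
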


\begin{proof}
%Let $k_1,\dots,k_q$ denote the $q$ points in $\Z$ which yield the $q$ distinct values for $v_1.$ 
Fix $\delta < \rho.$ Let $\sup_{|\Im(z)| < \rho} |v(z)| = C_v < \infty.$ Observe that, if $|E_j| > 2C_v,$ then the boundedness of $v$ implies $|v(z) - E_j| > C_v$ for any $|\Im(z)| < \rho$ and $k \in \Z.$ Thus, it just suffices to establish the claim for $|E_j| \leq 2C_v.$ 

Indeed, suppose not. Using compactness of $[-2C_v, 2C_v]^k,$ we may suppose that there is some $(E_1,...,E_k) \in [-2C_v, 2C_v]^k$ such that for any $\frac \delta 2 \leq y_0 \leq \delta,$ we have
$$\inf_{x \in [0,1]} \left|v(x + i y_0) - E_j\right| = 0$$
for some $1 \leq j \leq k.$ 

%$$\inf_{(E_1,E_2)\in [-2C_v, 2 C_v]} \sup_{\frac\delta 2 \leq y \leq \delta} \inf_{x \in [0,1]} \left|v(x + i y) - E_j\right| = 0.$$
%Then there is some pair $(E_1, E_2)$ such that, for any $\frac \delta 2 \leq y_0 \leq \delta,$ we have
%$$\inf_{x \in [0,1]} \left|v(x + i y_0) - E_j\right| = 0.$$ 

Since there are infinitely many choices of $y_0,$ but only finitely many choices of $E_j,$ we must be able to find a fixed $E_j,$ a sequence $y_n$ in our desired interval, and a sequence $x_n \in [0,1]$ such that 
$$v(x_n + i y_n) - E_j = 0.$$
%for some $1 \leq j \leq k.$
Since the left hand side is an analytic function, and since we are taking $x_n + i y_n$ in a compact subset of $\C,$ this analytic function must have an accumulation point of zeros in its domain, and thus it must be constant zero. This immediately implies $v(z)$ is constant, which is a contradiction. Thus, the claim holds. Uniformity of $\epsilon$ for any $k$-tuple follows from compactness of $[-2C_v, 2C_v].$
\end{proof}

%This result generalizes in an obvious way to $k$-tuples $(E_1,...,E_k)\in \R^k.$

With this lemma, we can now prove Theorem \ref{MainTheorem}.

\begin{proof}[{\bf{Proof of Theorem \ref{MainTheorem}}}]
Since $v$ is 1-periodic and real analytic, it has a bounded complex-analytic extension to the strip $|\Im(z)| < \rho.$ Say the extension is bounded by $C_v.$ Moreover, if we add any real number to $v,$ say $\alpha,$ then $v + \alpha$ still has a bounded complex-analytic extension to the same strip. Indeed, the bound has simply changed to $C_v + |\alpha|.$ 

We now consider the complex-analytic matrix-valued function 
\begin{equation}
M_N(z,E) = \prod_{k = N}^1 \begin{pmatrix} E - \lambda v(z + k\omega) - v_1(k) & -1 \\ 1 & 0 \end{pmatrix}.
\end{equation}
Observe that 
$$\norm{M_N(z,E)} \leq (C_v|\lambda| + |E| + \max|v_1| + 1)^N,$$
and thus
$$u_N(z) = \frac 1 N \ln \norm{M_N(z,E)}$$
is a subharmonic function on the strip $|\Im(z)| < \rho$ obeying
$$u_N(z) \leq \ln(C_v|\lambda| + |E| + \max|v_1| + 1).$$
Moreover, $M_N(z,E) \in SL_2(\C),$ so $\norm{M_N(z,E)} \geq 1.$ Thus
$$0 \leq u_N(z) \leq \ln(C_v|\lambda| + |E| + \max|v_1| + 1).$$
Our conclusion will now follow if we can bound $\int_0^1 u_N(x) dx$ from below independent of $N.$

Let $k_1,\dots,k_q$ denote the $q$ points in $\Z$ which yield the $q$ distinct values for $v_1.$  Let us consider any fixed $E \in \R$ and let $E_j = E - v_1(k_j)$ so that 
$$|v( x + i y) - E + v_1(k_j)| = |v(x + i y) - E_j|.$$
Observe that Lemma \ref{LineBound} is applicable in ($q$-tuple form) to the right hand side, so we can fix $0 < \delta = \frac\rho{100},$ and let $\epsilon > 0$ be the corresponding $\epsilon$ associated to this choice of $\delta$ in Lemma \ref{LineBound}. We know that for any $E \in \R,$ there is $\frac \delta 2 < y_0 < \delta$ such that for any $k \in \Z$ 
$$\inf_{x \in [0,1]} \left|v(x + i y_0) - \frac E \lambda + \frac{v_1(k)}{\lambda} \right| >  \epsilon.$$ Since $v$ is 1-periodic, we can extend this to the entire real line
\begin{equation}\label{KeyInequality}
\inf_{x \in \R} \left|v(x + i y_0) - \frac E \lambda + \frac{v_1(k)}{\lambda} \right| >  \epsilon.
\end{equation}
Define $\lambda_0 = \lambda_0(v) =  5C_v \epsilon^{-1}$ and fix $\lambda > \lambda_0.$ 

%At this point, one of two things could happen. Either $C_v|\lambda| + |E| + \max|v_1| \sim |E|,$ or $C_v|\lambda| + |E| + \max|v_1| \sim C_v |\lambda| + \max|v_1|.$

%Let us consider the first possibility. If $C_v|\lambda| + |E| + \max|v_1| \sim |E|,$ then we obtain
%\begin{align}
%\frac 1 N \ln \norm{M_N(x,E)} &\geq \frac 1 N \ln\left|\INNERPROD{M_N(x,E) \begin{pmatrix} 1 \\ 0 \end{pmatrix}}{\begin{pmatrix} 1 \\ 0 \end{pmatrix}}\right|\\
%&\geq \ln (|E| - 1)\\
%&\geq \ln|\lambda| + \ln(C_v)\\
%&\geq \frac 1 2 \ln|\lambda|.
%\end{align}
%Thus, in this case, taking a limit $N \to \infty$ yields our desired bound.

%Now, let us consider the second case. If we are slightly more careful, we can obtain a similar bound to before
%\begin{align}
%\frac 1 N \ln \norm{M_N(x,E)} &\geq \frac 1 N \ln\left|\INNERPROD{M_N(x,E) \begin{pmatrix} 1 \\ 0 \end{pmatrix}}{\begin{pmatrix} 1 \\ 0 \end{pmatrix}}\right|\\
%&\geq \frac 1 2 \ln|\lambda|.
%\end{align}

Consider the set $Z = \set{j \in \set{1,2,\dots,N}: |v_1(j) - E| \leq 5C_v \lambda}.$ Suppose $|Z| = k,$ and let $\set{j_1,\dots,j_{N - k}} = \set{1,\dots,N}\backslash Z.$
By induction on $N,$ \eqref{KeyInequality}, and our definition of $Z,$ we see that 
\begin{align}
\norm{M_N(a + iy_0,E)} &\geq \left|\INNERPROD{M_N(a + iy_0,E) \begin{pmatrix} 1 \\ 0 \end{pmatrix}}{\begin{pmatrix} 1 \\ 0 \end{pmatrix}}\right|\label{MNBound}\\
&\geq (\lambda \epsilon - 1)^k\prod_{n = 1}^{N - k}(|E - \lambda v(a + iy_0 + j_n\omega) - v_1(j_n)| - 1)
\end{align}
for any $a\in [0,1].$ Thus, we may improve our lower bound on $u_N$ for any $a\in \R$
$$u_N(a + iy_0) > \frac{k}{N}\ln(|\lambda|\epsilon - 1) + \frac{1}{N} \sum_{n = 1}^{N - k} \ln\left(|E - \lambda v(a + iy_0 + j_n\omega) - v_1(j_n)| - 1\right)> 0.$$

Now, we let $\mu_{a + iy_0}$ denote the harmonic measure associated to $a + iy_0$ in the complex strip $0 \leq iy \leq i\rho/2.$ In particular, $\mu_{a + i y_0}$ is a regular Borel measure on the two lines $y = 0$ and $iy = i\rho/2.$ The definition of the harmonic measure quickly yields
\begin{align}
\begin{split}\mu_{a + i y_0}\set{iy = i\rho/2} &= \frac{2 y_0}{\rho} 
%&< \frac{2\delta}{\rho} 
\end{split}\\
\begin{split}\mu_{a + i y_0}\set{iy = 0} &= 1 - \frac{2 y_0}{\rho} 
%&< 1 - \frac{\delta}{\rho}
\end{split}
%\frac{d\mu_{a + i y_0}}{dx}\bigg|_{y = 0} &< \frac{y_0}{x^2 + y_0^2}.
\end{align}
Since $u_N$ is subharmonic, we have
\begin{align}
\begin{split}u_N(a + iy_0) 
&\leq \int_{iy = 0} u_N(x) d\mu_{a + iy_0}(x) + \int_{iy = i\rho/2} u_N(x + iy) d\mu_{a + i y_0}(x)\\
&= \int_{iy = 0} u_N(x + a) d\mu_{iy_0}(x) + \int_{iy = i\rho/2} u_N(x + a + iy) d\mu_{iy_0}(x).
\end{split}\end{align}
Here we used a change of variables and translation properties of the harmonic measure on a horizontal strip (see proof of Proposition 11.21 in \cite{BourgainBook}). 
Now we can integrate throughout in $a$ over the unit interval, and appeal to periodicity of $u$ in $x$ to obtain
\begin{align}
\begin{split}\int_0^1 u_N(x + iy_0) dx &< \int_0^1 u_N(x) dx \cdot \mu_{iy_0}\set{iy = 0} + \int_0^1 u_N(x + i \frac \rho2) dx \cdot \mu_{iy_0}\set{iy = i\frac \rho2}\\
&\leq (1 - 2y_0/\rho) \int_0^1 u_N(x) dx  + (2y_0/\rho)\int_0^1 u_N(x + i\rho/2) dx.
\end{split}\end{align}

Moreover, $$u_N(x + i\rho/2) \leq \frac{k}{N} \ln(2 C_v|\lambda|) + \frac{1}{N}\sum_{n = 1}^{N - k}\ln\left(|E - \lambda v(a + i\rho/2 + j_n\omega) - v_1(j_n)| + 1\right),$$ so
\begin{align*}
\int_0^1 u_N(x + iy_0) dx &\leq \left(1 - \frac {2y_0}{\rho}\right) \int_0^1 u_N(x) dx \\
 &\quad + \frac{2y_0}{\rho}\frac{k}{N} \ln(C_v|\lambda|) \\
 &\quad + \frac{2y_0}{\rho}\frac{1}{N}\sum_{n = 1}^{N - k}\int_0^1\ln\left(|E - \lambda v(x + i\rho/2 + j_n\omega) - v_1(j_n)| + 1\right) dx.
\end{align*}

We thus have:
\begin{align}
\begin{split}\left(1 - \frac {2y_0}{\rho}\right) &\int_0^1 u_N(x) dx  \\
&\geq \frac{k}{N}\ln(\lambda\epsilon - 1) - \frac{2y_0}{\rho}\frac{k}{N} \ln(C_v|\lambda|)\\
&\quad + \frac{1}{N} \sum_{n = 1}^{N - k} \int_0^1\ln\left(|E - \lambda v(x + iy_0 + j_n\omega) - v_1(j_n)| - 1\right)dx \\
&\quad - \frac{2y_0}{\rho}\frac{1}{N}\sum_{n = 1}^{N - k}\int_0^1\ln\left(|E - \lambda v(x + i\rho/2 + j_n\omega) - v_1(j_n)| + 1\right) dx \end{split}\\
\begin{split}\label{eq:almostthere}&\geq \frac{k}{N}\left(\ln(\lambda\epsilon - 1) - \frac{2y_0}{\rho}\ln(C_v|\lambda|)\right)\\
&\quad + \frac{2y_0}{\rho}\frac{\rho}{2y_0}\frac{1}{N} \sum_{n = 1}^{N - k} \int_0^1\ln\left(|E - \lambda v(x + iy_0 + j_n\omega) - v_1(j_n)| - 1\right)dx \\
&\quad - \frac{2y_0}{\rho}\frac{1}{N}\sum_{n = 1}^{N - k}\int_0^1\ln\left(|E - \lambda v(x + i\rho/2 + j_n\omega) - v_1(j_n)| + 1\right) dx \end{split}
\end{align}

%%%%%%%
%Using $\delta = \rho/100$ and the above, we thus have:
%\begin{align}
%\begin{split}\frac{99}{100} \int_0^1 u(x) dx & \geq \frac{k}{N}\ln(\lambda\epsilon - 1) \\
%&\quad + \frac{1}{N} \sum_{n = 1}^{N - k} \int_0^1\ln\left(|E - \lambda v(x + iy_0 + j_n\omega) - v_1(j_n)| - 1\right)dx \\
%&\quad - \frac{1}{50}\frac{k}{N} \ln(C_v|\lambda|) \\
%&\quad - \frac{1}{50}\frac{1}{N}\sum_{n = 1}^{N - k}\int_0^1\ln\left(|E - \lambda v(x + i\rho/2 + j_n\omega) - v_1(j_n)| + 1\right) dx \end{split}\\
%\begin{split}&\geq \frac{k}{N}\left(\ln(\lambda\epsilon - 1) - \frac 1{50}\ln(C_v|\lambda|)\right)\\
%&\quad + \frac{1}{N} \sum_{n = 1}^{N - k} \int_0^1\ln\left(\frac{|E - \lambda v(x + iy_0 + j_n\omega) - v_1(j_n)| - 1}{(|E - \lambda v(x + i\rho/2 + j_n\omega) - v_1(j_n)| + 1)^{\frac{1}{50}}}\right)dx \end{split}\\
%\begin{split}&\geq \frac{k}{N}\frac{4}{5}\ln|\lambda|\\
%&\quad + \frac{1}{N} \sum_{n = 1}^{N - k} \int_0^1\ln\left(\frac{|E - \lambda v(x + iy_0 + j_n\omega) - v_1(j_n)| - 1}{(|E - \lambda v(x + i\rho/2 + j_n\omega) - v_1(j_n)| + 1)^{\frac{1}{50}}}\right)dx\end{split}
%&\quad - \frac 1 {50}\frac{1}{N}\sum_{n = 1}^{N - k}\int_0^1\ln\left(|E - \lambda v(x + i\rho/2 + j_n\omega) - v_1(j_n) + 1|\right) dx
%\end{align}
%%%%%%%

Now we have 
$v(x + i\rho/2) = v(x + iy_0) + \eta(x),$
where $|\eta(x)| \leq C_v,$ and $|E - \lambda v(x + iy_0 + j_n\omega) - v_1(j_n)| \geq 4 C_v\lambda,$ so
\begin{align}
\begin{split}\label{eq:etatriangle}\ln(|E - &\lambda v(x + i\rho/2 + j_n\omega) - v_1(j_n)| + 1)  \\
&\leq \ln(2|E - \lambda v(x + iy_0 + j_n\omega) - \lambda \eta(x + j_n\omega) - v_1(j_n)| + 1) \end{split}.
\end{align}

It now follows that we can bound \eqref{eq:almostthere} from below using \eqref{eq:etatriangle}, the definitions of $\eta(x)$ and $Z,$ and triangle inequality to obtain
%\begin{align*}
%\frac{k}{N}&\left(\ln(\lambda\epsilon - 1) - \frac{2y_0}{\rho}\ln(C_v|\lambda|)\right)\\
%&\quad + %\frac{2y_0}{\rho}\frac{1}{N} \sum_{n = 1}^{N - k} \int_0^1\ln\left(\frac{(|E - \lambda v(x + iy_0 + j_n\omega) - v_1(j_n)| - 1)^{\frac{\rho}{2y_0}}}{2|E - \lambda v(x + iy_0 + j_n\omega) - \lambda \eta(x + j_n\omega) - v_1(j_n)| + 1}\right)dx \\
%\frac{2y_0}{\rho}\frac{1}{N} \sum_{n = 1}^{N - k} \int_0^1\ln\bigg(\frac12(2|E - \lambda v(x + iy_0 + j_n\omega) -\lambda \eta(x + j_n \omega) - v_1(j_n)| - 1)^{\frac{\rho}{2y_0}-1}\\
 %&\quad - \frac{C_v \lambda +2}{6 C_v \lambda + 1}\bigg)dx
%\end{align*}

\begin{align}
\left(1 - \frac {2y_0}{\rho}\right) &\int_0^1 u_N(x) dx \\
\begin{split}&\geq \frac{k}{N}\left(\ln(\lambda\epsilon - 1) - \frac{2y_0}{\rho}\ln(C_v\lambda)\right)\\
&\quad + \frac{2y_0}{\rho}\frac{1}{N} \sum_{n = 1}^{N - k} \int_0^1 \ln\left((C_v \lambda)^{\rho/2y_0 - 1}\right)dx\end{split}\\
\begin{split}&\geq \frac{k}{N}\left(\ln(\lambda) + \ln(\epsilon) - \frac{2y_0}{\rho}\ln(\lambda) - \frac{2y_0}{\rho}\ln(C_v)\right)\\
&\quad + \frac{2y_0}{\rho}\left(\frac{\rho}{2y_0} - 1\right)\frac{N - k}{N} \ln(C_v \lambda)\end{split}\\
\begin{split}&= \frac{k}{N}\left(1 - \frac{2y_0}{\rho}\right)\ln(\lambda) + \frac k N\left(\ln(\epsilon) - \frac{2y_0}{\rho}\ln(C_v)\right)\\
&\quad + \frac{N - k}{N}\left(1 - \frac{2y_0}{\rho}\right) \ln(\lambda) + C_{v,\rho}\end{split}
\end{align}

Since $\delta/2 < y_0 < \delta = \rho/100,$ the term $\ln(\epsilon) - \frac{2y_0}{\rho}\ln(C_v)$ is a constant which may be bounded by something depending only on $v, \lambda_0,$ and $\rho,$ dividing by $1 - \frac{2y_0}{\rho}$ yields our result.
%\begin{align}
%\frac{|E - \lambda v(x + iy_0 + j_n\omega) - v_1(j_n) - 1|}{|E - \lambda v(x + i\rho/2 + j_n\omega) - v_1(j_n) + 1|^{\frac1{50}}} &= |E - \lambda v(x + iy_0 + j_n\omega) + \lambda\eta(x)- v_1(j_n) - 1|^{\frac{49}{50}} \\
%&-2|E - \lambda v(x + iy_0 + j_n\omega) -\lambda\eta(x) - v_1(j_n) + 1|^{-\frac{1}{50}}\\
%&\geq |2C_v \lambda|^{\frac{49}{50}} - 2|C_v\lambda|^{-\frac{1}{50}}\\
%&\geq |\lambda|^{\frac{49}{50}}
%\end{align}

%Altogether, this yields
%\begin{align}
%\int_0^1 u(x) dx & \geq \frac{100}{99} \left(\frac{k}{N}\frac{4}{5}\ln|\lambda| + \frac{N - k}{N}\frac{49}{50} \ln |\lambda|\right)\\
%&\geq \frac 4 5 \ln|\lambda|,
%\end{align}
%which completes our proof.
\end{proof}

\section*{Acknowledgement}
We would like to thank S. Jitomirskaya for presenting us with the problem that lead to this work, and also thank her and W. Liu for many useful suggestions and comments on earlier versions of the manuscript. In addition, we would like to thank S. Klein for suggesting that our methods can lead to an improvement in our original result. This research was partially supported by DMS-2000345, DMS-2052572, DMS-2052899, and Simons 681675.

%NSF DMS-1901462, DMS-2052899, 

%NSF DMS-2052572 DMS-2000345)

\bibliographystyle{abbrv} % abbrv
\bibliography{PaperRevision.bib}

\end{document}